\newtheorem{theorem}{Theorem}[section]
\newtheorem{lemma}[theorem]{Lemma}
\theoremstyle{plain}
\theoremstyle{definition}
\theoremstyle{definition} 
\newtheorem*{definition*}{Definition}
\theoremstyle{theorem} 
\newtheorem{conjecture}[theorem]{Conjecture}
\theoremstyle{theorem} 
\newtheorem*{conjecture*}{Conjecture}
\theoremstyle{theorem}
\theoremstyle{definition} 
\newtheorem{assumption}[theorem]{Assumption}
\theoremstyle{definition} 
\newtheorem*{assumption*}{Assumption}
\theoremstyle{remark}
\theoremstyle{remark} 
\newtheorem*{remark*}{Remark}
\newcommand{\one}{\mathbf{1}}
\newcommand{\Lii}{L^{2}}
\newcommand{\Lp}{L^{p}}
\newcommand{\Lq}{L^{q}}
\newcommand{\Linf}{L^{\infty}}
\providecommand{\floor}[1]{\lfloor#1\rfloor}
\providecommand{\abs}[1]{\lvert#1\rvert}
\providecommand{\Abs}[1]{\left|#1\right|}
\providecommand{\norm}[1]{\lVert#1\rVert}
\providecommand{\Norm}[1]{\left\|\,#1\,\right\|}
\def\b{{\beta}}
\def\d{{\delta}}
\def\e{{\epsilon}}
\def\t{{\tau}}
\title[Limit theorems]{Limit theorems for rank-one Lie groups}
\author[A. Gorodnik]{Alexander Gorodnik}
\author[F. A. Ram{\'i}rez]{Felipe A. Ram{\'i}rez}
\subjclass{Primary: 37A15, 22F10; Secondary: 22D40, 22E46.}
\date{\today}
\date{}
\address{School of Mathematics, University of Bristol, Bristol, UK}
\email{a.gorodnik@bristol.ac.uk}
\address{School of Mathematics, University of Bristol, Bristol, UK}
\email{f.a.ramirez@bristol.ac.uk}
\begin{document}


\begin{abstract}
We investigate asymptotic behaviour of averaging operators for actions of  simple rank-one Lie groups.
It was previously known that these averaging operators converge almost everywhere, and we establish
a more precise asymptotic formula that describes their deviations from the limit.
\end{abstract}


\maketitle 


\section{Introduction}

Let $(X,\mu)$ be a probability space and $\{T_t\}_{t\in\mathbb{R}}$ a measure preserving 
one-parameter flow on $X$. It is a fundamental problem in ergodic theory to analyse 
the statistical properties of ``observables'' $f(T_tx)$ 
defined for a measurable function $f:X\to \mathbb{C}$ and a point $x\in X$.
In particular, one would like to understand asymptotic behaviour of the time averages
$$
(\mathcal{A}_t f)(x):=\frac{1}{t}\int_0^t f(T_sx)\,ds.
$$
It is natural to expect that for chaotic flows the quantities $f(T_tx)$
should behave similarly to independent identically distributed random variables.
Indeed, assuming that the flow is ergodic, it follows from the Pointwise Ergodic Theorem
that for every $f\in L^1(X)$, the average $\mathcal{A}_tf$ converges almost everywhere to
$\int_X f\, d\mu$ as $t\to\infty$,
which is reminiscent of the Law of Large Numbers, and assuming that the flow is aperiodic,
there exists a function $f\in L^2(X)$ such that
the normalised deviations 
$\frac{\mathcal{A}_tf - \int_X f\,d\mu}{\|\mathcal{A}_t f- \int_X f\,d\mu\|_2}$ converge in distribution to
the standard normal law as $t\to\infty$ (see \cite{BD87}), which is reminiscent of the Central Limit Theorem.
The convergence of deviations to the normal law is a widespread phenomenon
that holds, for instance, for all sufficiently smooth functions in the setting of the geodesic flows
on compact manifolds of negative curvature (see \cite{s,r}).
  
In this paper we are interested in
the behaviour of deviations of averages for actions of an interesting
class of large groups---the connected simple rank-one Lie groups with finite centre.
For instance, our results apply to the groups of orientation preserving isometries
of the hyperbolic spaces. 

Let $G$ be a connected simple rank-one Lie group with finite centre
that acts measure-preservingly on a standard probability space $(X,\mu)$. We fix a maximal compact subgroup $K$
in $G$ and consider the sets
\begin{equation}\label{eq:b_t}
B_t:=\{g\in G:\, d(gK,K)\le t\},
\end{equation}
where $d$ is the canonical Riemannian metric on the corresponding symmetric space $G/K$. 
Given a measurable function $f:X\to \mathbb{C}$ and point $x\in X$, we define the averaging operators
\begin{equation}\label{eq:a}
(\mathcal{A}_t f)(x):=\frac{1}{m(B_t)}\int_{B_t} f(g^{-1}x)\,dm(g),
\end{equation}
where $m$ denotes a Haar measure on $G$. A Pointwise Ergodic Theorem for 
these operators has been established in \cite{Nev94,Nev97,NS97}. It shows that
for every $f\in L^2(X)$,
\begin{equation}\label{eq:m}
\mathcal{A}_t f\to P_0f\quad\hbox{as $t\to\infty$, }
\end{equation}
in $L^2$-norm and almost everywhere, 
where $P_0$ denotes the orthogonal projection on the subspace of $G$-invariant functions.
Our aim here is to investigate asymptotics of the deviations $\mathcal{A}_tf - P_0 f$.
It turns out that they exhibit a very different behaviour compared with one-parameter flows.
For instance, $\frac{\mathcal{A}_tf - P_0 f}{\|\mathcal{A}_t f- P_0 f\|_2}$
might converge almost everywhere (see Theorem \ref{discretetime} below).

Our arguments are based on representation-theoretic techniques.
We denote by $\hat G$ the set of equivalence classes of irreducible unitary representations of $G$
equipped with the Fell topology.
The action of $G$ on $X$ defines a natural unitary representation of $G$ on $L^2(X)$ which can be
disintegrated as
\begin{equation}\label{eq:decomp}
L^2(X)=\int_{\hat G}^\oplus d_\sigma\mathcal{H}_\sigma\, d\nu(\sigma),
\end{equation}
where $\sigma:G\to U(\mathcal{H}_\sigma)$ are the corresponding irreducible representations,
$d_\sigma\in \mathbb{N}\cup\{\infty\}$ their multiplicities, and  
$\nu$ is a Borel measure on $\hat G$.
Let $\hbox{Spec}_G(X)\subset \hat G$ be the support of the measure $\nu$.
We denote by $\hat G^{1}$ the subset of $\hat G$ consisting of spherical representations, that is,
representations $\sigma:G\to U(\mathcal{H}_\sigma)$ such that $\mathcal{H}_\sigma$ contains a nonzero
$K$-invariant vector.
Since the averaging operators $\mathcal{A}_t$ are bi-invariant under $K$,
only the spherical representations will play an essential role in our analysis.
The set $\hat G^1$ has been described for rank-one groups by Kostant \cite{k},
and it is customary to parametrise it as
\begin{equation}\label{eq:dual}
\hat G^1\simeq \{\rho\}\cup (0,\rho']\cup i\mathbb{R}^+,
\end{equation}
where $0<\rho'\le \rho$. Here $\{\rho\}$ corresponds to the trivial representation,
$(0,\rho']$ corresponds to the complementary series representations, and 
$i\mathbb{R}^+$ corresponds to the principle series representations.
For $G=\hbox{SO}^\circ(n,1)$, we have $\rho=\rho'=\frac{n-1}{2}$.

A rich collection of examples of spaces equipped with actions of $G$, which play an important role in various
number-theoretic applications, is provided by homogeneous spaces of algebraic groups.
Let $H\subset \hbox{GL}_d(\mathbb{R})$ be a semisimple real algebraic group defined over $\mathbb{Q}$.
Its congruence subgroup 
$$
\Gamma:=\{\gamma \in H\cap \hbox{GL}_d(\mathbb{Z}):\, \gamma=\hbox{id}\mod q\}
$$
has finite covolume in $H$, and we take $X=H/\Gamma$
equipped with the normalised invariant measure $\mu$.
Let us also assume that $G$ is the connected component of a real algebraic subgroup of $H$ defined over
$\mathbb{Q}$. Then we obtain a natural measure-preserving action $G \curvearrowright (X,\mu)$.
We will call such actions {\it arithmetic}.
Description of $\hbox{Spec}_G(X)$ for arithmetic actions 
is the central problem in the theory of automorphic representations (see, for instance, \cite{sa}). 

The following conjecture has been formulated in \cite{BLS92}:

\begin{conjecture}[Purity]\label{c:purity}
The spectrum of arithmetic actions $G \curvearrowright (X,\mu)$ for $G=\hbox{\rm SO}^\circ(n,1)$ satisfies
$$
\hbox{\rm Spec}_G(X)\cap \hat G^1\subset \bigcup_{0\le j<\frac{n-1}{2}}\left\{\frac{n-1}{2}-j\right\}\cup i\mathbb{R}^+.
$$
\end{conjecture}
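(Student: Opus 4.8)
This is a deep open problem, so what follows is a strategy rather than a complete argument. The plan is to reduce the purity statement to the classification of automorphic representations of the ambient group $H$ and then to read off the admissible spherical parameters from the archimedean components of the relevant packets. First I would exploit the fact that, since $X=H/\Gamma$ and $G$ is the connected component of an algebraic $\mathbb{Q}$-subgroup of $H$, the representation of $G$ on $L^2(X)$ is simply the restriction to $G$ of the regular representation of $H$ on $L^2(H/\Gamma)$. Decomposing the latter into its discrete automorphic part and its continuous (Eisenstein) part, one sees that every spherical $\sigma\in\mathrm{Spec}_G(X)\cap\hat G^1$ arises as a $G$-constituent of the restriction of some automorphic representation $\pi$ of $H$, and that its spectral parameter is governed by the infinitesimal character of $\pi$ together with the branching rule $H\downarrow G$.

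The second step is to isolate the source of complementary-series (non-tempered) parameters. Tempered automorphic representations of $H$ restrict to $G$ with spectral support weakly contained in the regular representation, hence tempered, and so contribute only to the $i\mathbb{R}^+$ part of $\hat G^1$. Thus any spherical parameter in the complementary range $(0,\rho']$ must come from a \emph{non-tempered} automorphic representation. Here I would invoke Arthur's classification: the non-tempered discrete automorphic representations are parametrised by Arthur parameters $\psi\colon L_F\times\mathrm{SL}_2(\mathbb{C})\to{}^L H$ with nontrivial restriction to the Arthur $\mathrm{SL}_2$, and the dimension of that $\mathrm{SL}_2$-factor quantifies exactly the displacement of the parameter off the tempered line. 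For $G=\mathrm{SO}^\circ(n,1)$ these displacements are by integers, so that the spherical members of the associated (Adams--Johnson, cohomological) archimedean packets sit precisely at the points $\frac{n-1}{2}-j$ with $0\le j<\frac{n-1}{2}$. Carrying out this final computation---identifying which packets carry a nonzero $K$-spherical vector and checking that the surviving parameters are exactly these---would yield the stated inclusion.

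The hard part will be twofold. First, Arthur's classification is presently available in full only for quasi-split classical groups (together with certain extensions), whereas $H$ is an essentially arbitrary semisimple $\mathbb{Q}$-group; transferring the required statement to $H$ demands functoriality and endoscopic descent that are not known in general, which is precisely why the conjecture remains open. Second, even granting the classification, one must control the continuous spectrum---ruling out exceptional parameters hiding in residues of Eisenstein series---and make the archimedean branching $H\downarrow G$ explicit enough to pin down the spherical vectors at the boundary of the complementary series. It is this passage from the local, archimedean packet structure to a global constraint on $\mathrm{Spec}_G(X)$ that I expect to be the genuine obstacle.
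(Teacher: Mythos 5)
You have correctly identified the nature of this statement: it is Conjecture~\ref{c:purity}, an open problem formulated in \cite{BLS92}, and the paper offers no proof of it. The authors only cite partial progress --- the Bergeron--Clozel theorem \cite{BC10}, which establishes the inclusion up to an exceptional complementary-series interval $\bigl(0,\tfrac12-\tfrac{1}{N^2+1}\bigr]$, and the remark that \cite[Th.~1.1(a)]{BS91} extends the conjecture's scope from homogeneous spaces to general arithmetic actions --- and then adopt a weaker \emph{purity assumption} (Assumption~\ref{a:purity}) as a hypothesis for their theorems. So there is no proof in the paper against which your argument can be checked; it can only be compared with the literature the paper points to.

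Measured against that literature, your strategy is essentially the established one, and your self-diagnosis of the obstacles is accurate. The reduction of tempered automorphic representations to the $i\mathbb{R}^+$ part under restriction is the Burger--Sarnak principle of \cite{BS91}, and the attack on the non-tempered part via Arthur parameters and the $\mathrm{SL}_2$-displacement is exactly the route of \cite{BC10} (and, for $\mathrm{SU}(n,1)$, of \cite[Ch.~6]{bc}, conditional on Arthur's conjectures \cite{a}). The gaps you name are the genuine ones: the unconditional unavailability of Arthur's classification beyond (quasi-split) classical groups, the control of residual and continuous spectrum, and the archimedean analysis needed to exclude spherical parameters in the low complementary range --- this last point is precisely where \cite{BC10} loses the interval $\bigl(0,\tfrac12-\tfrac{1}{N^2+1}\bigr]$. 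In short: your proposal is not a proof, as you say yourself, but it is a faithful description of the state of the art; it should not be graded as a proof of the conjecture, and no proof exists in the paper either.
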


Although this conjecture was formulated only for homogeneous spaces of $\hbox{\rm SO}^\circ(n,1)$,
it follows from \cite[Th.~1.1(a)]{BS91} that it must also hold for general arithmetic actions
of $\hbox{\rm SO}^\circ(n,1)$.

It was demonstrated in \cite{BLS92} that there are arithmetic actions for which 
representations corresponding to $\frac{n-1}{2}-j$ do occur in $\hbox{Spec}_G(X)$.
Conjecture \ref{c:purity} was partially proved by Bergeron and Clozel \cite{BC10} who showed that
for arithmetic actions of $\hbox{\rm SO}^\circ(n,1)$,
$$
\hbox{\rm Spec}_G(X)\cap \hat G^1\subset \bigcup_{0\le j<\frac{n-1}{2}}\left\{\frac{n-1}{2}-j\right\}
\cup \left(0,\frac{1}{2}-\frac{1}{N^2+1}\right]\cup i\mathbb{R}^+,
$$
where $N=n$ if $n$ is even and $N=n+1$ if $n$ is odd.
We note that a similar purity phenomenon is expected to hold for other rank-one groups.
In particular, it was shown in \cite[Ch.~6]{bc} that Arthur's conjectures \cite{a} imply that 
such a property holds for arithmetic actions of $\hbox{SU}(n,1)$.

\vspace{0.3cm}

Motivated by Conjecture \ref{c:purity} and supporting results from \cite{BC10},
we investigate asymptotic behaviour of the averaging operators $\mathcal{A}_t$
under the following purity assumption (see Figure \ref{p:spec}):

\begin{assumption}[Purity]\label{a:purity}
A measure-preserving action of $G$ on $(X,\mu)$ satisfies
$$
\hbox{\rm Spec}_G(X)\cap \hat G^1=\{s_0,\ldots,s_k\}\cup\Omega
$$
for $s_0=\rho > s_1 >\dots> s_k > r$ and a subset $\Omega$ of $(0,r]\cup i\mathbb{R}^+$.
\end{assumption}

\begin{figure}[h]\label{p:spec}
\begin{tikzpicture}[>=latex]
	\draw[thin] (0,0)--(5.5,0);
	\draw[thin, densely dotted] (5.5,0)--(7,0);
	\draw[line width=2.5pt] (0,0)--(1.5,0);
	\draw[->,line width=2.5pt, cap=round] (0,0)--(0,6);
	\fill (2,0) circle (2pt) (3,0) circle (2pt) (4,0) circle (2pt) (5,0) circle (2pt) (5.5,0) circle (1pt) (7,0) circle (2pt);
	\draw (7,2) node(rho) [draw] {$s_0=\rho$};
	\draw[->, shorten >=2pt] (rho)--(7,0);
	\draw (5.5,3) node(s0) [draw] {$\rho'$};
	\draw[->, shorten >=2pt, densely dotted] (s0)--(5.5,0);
	\draw (5,2) node(s1) [draw] {$s_1$};
	\draw[->, shorten >=2pt] (s1)--(5,0);
	\draw (4,2) node(s2) [draw] {$s_2$};
	\draw[->, shorten >=2pt] (s2)--(4,0);
	\draw (3,2) node(sk1) [draw] {$s_{k-1}$};
	\draw[->, shorten >=2pt] (sk1)--(3,0);
	\draw (2,2) node(sk) [draw] {$s_k$};
	\draw[->, shorten >=2pt] (sk)--(2,0);
	\draw (1.5,1) node(z0) [draw] {$r$};
	\draw[->, shorten >=5pt] (z0)--(1.5,0);
	\draw (3.5,0.25) node(dots) {\dots};
	\draw (3.5,4) node(suppnu) [draw] {$\Omega$};
	\draw[->, shorten >=5pt] (suppnu)--(0,4);
\end{tikzpicture}
\caption{Purity.}
\label{illustration}
\end{figure}

Our first result concerns convergence in $L^2$-norm.

\begin{theorem}\label{mean}
Let  $G \curvearrowright (X,\mu)$ be a measure-preserving action 
on a standard probability space satisfying Assumption~\ref{a:purity}.  
Then there exist pairwise orthogonal projections $P_0,\ldots,P_k$ on $\Lii(X)$ and smooth functions 
$\psi_0,\ldots,\psi_k$ such that
$$
\psi_0=1,\quad \quad\psi_j (t) \sim_{t\to\infty} c_j\, e^{-(\rho-s_j) t}\;\;\hbox{with $c_j>0$},\;\;
j=1,\ldots, k,
$$
and for every $f \in \Lii(X)$ and $t\ge 1$,
\[
	\Norm{\mathcal{A}_tf - \sum_{j=0}^{k}\psi_j (t)\, P_jf}_{2} \ll\,t e^{-(\rho-r) t}\,\norm{f}_2.
\]
\end{theorem}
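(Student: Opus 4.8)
The plan is to exploit the fact that the averaging operators $\mathcal{A}_t$ are bi-$K$-invariant, and hence are diagonalised by the spherical part of the decomposition \eqref{eq:decomp}. Writing $\beta_t:=\frac{1}{m(B_t)}\mathbf{1}_{B_t}$, so that $\mathcal{A}_t=\pi(\beta_t)$ for the natural unitary representation $\pi$ of $G$ on $\Lii(X)$, the first step is to record that $\mathcal{A}_t=P^K\mathcal{A}_t P^K$, where $P^K$ is the orthogonal projection onto the $K$-invariant vectors; this follows from the left and right $K$-invariance of $B_t$. Consequently $\mathcal{A}_t$ annihilates the non-spherical part of \eqref{eq:decomp} and is completely determined by its action on $\Lii(X)^K=\int_{\hat G^1}^\oplus d_\sigma\,\mathcal{H}_\sigma^K\,d\nu(\sigma)$. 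Since $G$ has rank one, each spherical $\mathcal{H}_\sigma$ contains a one-dimensional space of $K$-fixed vectors on which $\mathcal{A}_t$ acts by the scalar $\widehat{\beta_t}(\sigma)=\int_G\beta_t(g)\,\varphi_\sigma(g)\,dm(g)$, the spherical transform of $\beta_t$, where $\varphi_\sigma$ is the elementary spherical function attached to $\sigma$. Using the parametrisation \eqref{eq:dual} I will regard $\widehat{\beta_t}$ as a function of the spectral parameter $s$, so that $\mathcal{A}_t$ becomes multiplication by $s\mapsto\widehat{\beta_t}(s)$ against the pushforward of $\nu$.

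The heart of the proof is a careful analysis of $\widehat{\beta_t}(s)$. Passing to the Cartan decomposition $G=KA^+K$ and writing the Haar density along $A^+$ as $\delta(u)$ (with $\delta(u)\sim\mathrm{const}\cdot e^{2\rho u}$ as $u\to\infty$, where $u$ is the distance parameter), I obtain
\[
	\widehat{\beta_t}(s)=\frac{1}{m(B_t)}\int_0^t \varphi_s(a_u)\,\delta(u)\,du,\qquad m(B_t)=\int_0^t\delta(u)\,du.
\]
The two inputs needed are the volume asymptotics $m(B_t)\sim\mathrm{const}\cdot e^{2\rho t}$ and the Harish-Chandra asymptotics and uniform bounds for the spherical functions. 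For a fixed atom $s_j\in(r,\rho)$ the expansion $\varphi_{s_j}(a_u)\sim c(s_j)\,e^{(s_j-\rho)u}$ with $c(s_j)>0$ yields, after inserting the density and integrating,
\[
	\psi_j(t):=\widehat{\beta_t}(s_j)\sim c_j\,e^{-(\rho-s_j)t},\qquad c_j>0,
\]
which produces the claimed functions; the atom $s_0=\rho$ gives $\varphi_\rho\equiv1$, hence $\psi_0\equiv1$. For the remainder of the spectrum I will use the uniform Harish-Chandra bound $\abs{\varphi_s(a_u)}\ll(1+u)\,e^{(\mathrm{Re}\,s-\rho)u}$, valid for $\mathrm{Re}\,s\ge0$, the extra factor $(1+u)$ accounting for the resonance at the bottom $s=0$ of the principal series; integrating against $\delta(u)$ and dividing by $m(B_t)$ gives
\[
	\abs{\widehat{\beta_t}(s)}\ll(1+t)\,e^{(\mathrm{Re}\,s-\rho)t}
\]
uniformly for $s\in(0,\rho]\cup i\RR^+$.

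To conclude, let $P_j$ denote the orthogonal projection of $\Lii(X)$ onto the space of $K$-fixed vectors in the $\sigma_{s_j}$-isotypic component; since each $s_j$ is an isolated point of $\hbox{Spec}_G(X)\cap\hat G^1$ it is an atom of $\nu$, so $P_j\ne0$, and for $j=0$ this is exactly the projection $P_0$ onto the $G$-invariant functions. These projections are pairwise orthogonal, and on their ranges $\mathcal{A}_t$ acts as the exact scalar $\psi_j(t)$, whence $\mathcal{A}_t f-\sum_{j=0}^k\psi_j(t)P_jf=\mathcal{A}_t P_\Omega f$, where $P_\Omega$ projects $\Lii(X)^K$ onto the part supported on $\Omega$. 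Because $\Omega\subset(0,r]\cup i\RR^+$, the operator norm of $\mathcal{A}_t$ on this subspace equals $\sup_{s\in\Omega}\abs{\widehat{\beta_t}(s)}\ll(1+t)\,e^{-(\rho-r)t}\ll t\,e^{-(\rho-r)t}$ for $t\ge1$, giving the stated estimate since $\norm{P_\Omega f}_2\le\norm{f}_2$. The main obstacle is the second step: establishing the volume asymptotics and, more delicately, the asymptotics and uniform-in-$s$ bounds for the spherical functions with explicit positive leading constants, since one must control the transition between complementary and principal series and the resonance at $s=0$ responsible for the polynomial factor $t$ in the error term.
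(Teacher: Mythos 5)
Your proposal is correct and follows essentially the same route as the paper's own proof: diagonalising $\mathcal{A}_t$ on the spherical part of the decomposition \eqref{eq:decomp} via the scalars $\psi_\sigma(t)=\int_G\phi_\sigma\,d\beta_t$, using the uniform spherical-function bound \eqref{eq:01} to get $|\psi_\sigma(t)|\ll t\,e^{-(\rho-r)t}$ on $\Omega$, and the asymptotics \eqref{eq:02} (the paper invokes l'H\^opital's rule at this point) to obtain $\psi_j(t)\sim c_j e^{-(\rho-s_j)t}$ at the atoms $s_j$. The only differences are cosmetic: you phrase the argument in terms of the spherical transform and operator norms on the $K$-fixed part, where the paper computes the $L^2$-norm of the remainder directly in the direct integral.
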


The projection $P_0$ in Theorem \ref{mean} is the orthogonal projection on the subspace
of $G$-invariant functions in $L^2(X)$. When $k=0$, Theorem \ref{mean} 
gives a quantitative Mean Ergodic Theorem for the operators $\mathcal{A}_t$ as in (\ref{eq:m}).

\vspace{0.3cm}

We also establish an almost everywhere asymptotic formula for the averaging operators at
integral times.

\begin{theorem}\label{discretetime}
Let  $G \curvearrowright (X,\mu)$ be a measure-preserving action 
on a standard probability space satisfying Assumption~\ref{a:purity}.  
Then there exist pairwise orthogonal projections $P_0,\ldots,P_k$ on $\Lii(X)$ and functions 
$\psi_0,\ldots,\psi_k$ as in Theorem \ref{mean} such that for every $f \in \Lii(X)$,
almost every $x\in X$, and $n\in\mathbb{N}$,
\[
	\Abs{(\mathcal{A}_nf)(x) - \sum_{j=0}^{k}\psi_j (n)\,(P_j f)(x)} \leq
        C_{\e}(x,f)\,n^{3/2+\e}e^{-(\rho-r)n },\quad \e>0,
\]
where $\norm{C_\e(\cdot,f)}_2 \ll_\e\,\norm{f}_2$.
\end{theorem}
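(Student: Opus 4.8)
The plan is to bootstrap the pointwise estimate from the $L^2$ asymptotics of Theorem \ref{mean} by a square-function (Borel--Cantelli type) argument, exploiting crucially that here the time variable runs over the countable set $\NN$. I would first abbreviate the remainder by
$$
R_n f := \mathcal{A}_n f - \sum_{j=0}^{k}\psi_j(n)\, P_j f .
$$
For each fixed $n$ this is a bona fide measurable function of $x$, since $\mathcal{A}_n$ is given by the explicit integral \eqref{eq:a} and the $P_j$ are orthogonal projections; and Theorem \ref{mean}, specialised to $t=n$, supplies the norm bound $\Norm{R_n f}_2 \ll n\, e^{-(\rho-r)n}\,\norm{f}_2$, hence $\Norm{R_n f}_2^2 \ll n^2\, e^{-2(\rho-r)n}\,\norm{f}_2^2$.

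The key observation is that the target rate $n^{3/2+\e}e^{-(\rho-r)n}$ is precisely the weight that turns these squared bounds into a convergent series. I would therefore define
$$
C_\e(x,f)^2 := \sum_{n=1}^{\infty}\frac{\abs{R_n f(x)}^2}{n^{3+2\e}\,e^{-2(\rho-r)n}} ,
$$
which is a sum of nonnegative measurable functions, so by monotone convergence
$$
\Norm{C_\e(\cdot,f)}_2^2 = \sum_{n=1}^{\infty}\frac{\Norm{R_n f}_2^2}{n^{3+2\e}\,e^{-2(\rho-r)n}} \ll \Big(\sum_{n=1}^{\infty} n^{-1-2\e}\Big)\norm{f}_2^2 \ll_\e \norm{f}_2^2 ,
$$
where the final step uses that $\sum_n n^{-1-2\e}$ converges for every $\e>0$. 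In particular $C_\e(\cdot,f)\in\Lii(X)$ with $\Norm{C_\e(\cdot,f)}_2\ll_\e\norm{f}_2$, so $C_\e(x,f)<\infty$ for almost every $x$, and—this is the point of packaging all times into a single sum—the resulting full-measure set is independent of $n$.

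Finally, since every term of a series of nonnegative quantities is dominated by its total, I get $\abs{R_n f(x)}^2 \le C_\e(x,f)^2\, n^{3+2\e}\,e^{-2(\rho-r)n}$ for all $n$, and taking square roots yields
$$
\Abs{(\mathcal{A}_n f)(x) - \sum_{j=0}^{k}\psi_j(n)\,(P_j f)(x)} \le C_\e(x,f)\, n^{3/2+\e}\,e^{-(\rho-r)n}
$$
for almost every $x$ and every $n\in\NN$, which is exactly the claimed estimate with the required $L^2$ control on $C_\e(\cdot,f)$.

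Because the genuine analytic content—the sharp $L^2$ asymptotics together with the linear-in-$t$ correction factor—is already delivered by Theorem \ref{mean}, I do not expect a serious obstacle in this deduction; the one feature to keep in view is that the argument is intrinsically confined to discrete times, as it rests on summing over $\NN$. The loss of $n^{1/2+\e}$ relative to the mean rate is precisely the cost of converting an $L^2$ bound into an almost-everywhere one by this square function, and the exponent $3/2$ is forced by balancing the $n^2$ growth of $\Norm{R_n f}_2^2$ against the summability threshold. Upgrading the estimate to continuous $t$ would instead require a maximal inequality controlling the oscillation of $\mathcal{A}_t f$ between consecutive integers, which lies outside the scope of this statement.
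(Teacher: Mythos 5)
Your proposal is correct and is essentially identical to the paper's own proof: the paper defines the same square function $C_\e(\cdot,f)$ with the same weights $n^{-3-2\e}e^{2(\rho-r)n}$, bounds its $L^2$ norm via Theorem~\ref{mean} and the convergence of $\sum_n n^{-1-2\e}$, and concludes by dominating each term of the nonnegative series by the total. The only cosmetic difference is that you explicitly invoke monotone convergence for the norm--sum interchange, which the paper leaves implicit.
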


Theorem \ref{discretetime}, in particular, implies that for $f\in L^2(X)$ with $P_1f\ne 0$,
$$
\|\mathcal{A}_nf -P_0f\|_2\approx e^{-(\rho-s_1)t}, 
$$
and $\frac{\mathcal{A}_nf -P_0f}{\|\mathcal{A}_nf -P_0f\|_2}$ converges almost everywhere
to $\frac{P_1f}{\|P_1f\|_2}$ as $n\to \infty$.

\vspace{0.3cm}

Using an interpolation argument, we deduce an
almost everywhere asymptotic formula for the averaging operators at continuous times.

\begin{theorem}\label{pointwise}
Let  $G \curvearrowright (X,\mu)$ be a measure-preserving action 
on a standard probability space satisfying Assumption~\ref{a:purity}.  
Then there exist pairwise orthogonal projections $P_0,\ldots,P_k$ on $\Lii(X)$ and functions 
$\psi_0,\ldots,\psi_k$ as in Theorem \ref{mean} such that 
for every $f \in \Lp(X)$, $p>2$, almost every $x\in X$, and $t\ge 1$,
\[
	\Abs{(\mathcal{A}_tf)(x) - \sum_{j=0}^{k}\psi_j (t)\,(P_j f)(x)} \leq C_{p}(x,f)\,e^{-(1/2-1/p)(\rho-r)t},
\]
where $\norm{C_{p}(\cdot,f)}_2 \ll_{p}\,\norm{f}_p$.
\end{theorem}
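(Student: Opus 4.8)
The plan is to upgrade the integer-time estimate of Theorem~\ref{discretetime} to continuous time by interpolating the $L^2$ mean bound of Theorem~\ref{mean} against an $L^p$ maximal inequality. Writing $R_tf:=\mathcal{A}_tf-\sum_{j=0}^{k}\psi_j(t)\,P_jf$ for the remainder, the assertion is equivalent to the weighted maximal bound
\[
\Norm{\sup_{t\ge1}e^{(1/2-1/p)(\rho-r)t}\,\abs{R_tf}}_2\ll_p\norm{f}_p .
\]
First I would split $[1,\infty)=\bigcup_{n\ge1}[n,n+1]$ and dispose of the integer skeleton. By Theorem~\ref{discretetime} the values $R_nf$ carry the \emph{full} rate $e^{-(\rho-r)n}$ with $\Lii$-norm controlled by $\norm{f}_2\le\norm{f}_p$, so $\sup_n e^{(1/2-1/p)(\rho-r)n}\abs{R_nf}$ is harmless: the surviving exponential gain $e^{-(1/2+1/p)(\rho-r)n}$ beats the polynomial factor $n^{3/2+\e}$ and is summable. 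Everything therefore reduces to the local oscillation $\sup_{t\in[n,n+1]}\abs{R_tf-R_nf}$.

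To control the oscillation I would establish two estimates for the associated sublinear local maximal operator and then interpolate. On one side, the maximal inequality for the ball averages (cf.\ \cite{Nev94,Nev97,NS97}), together with the boundedness of the spectral projections $P_j$ on $\Lp(X)$, gives an $L^p\to L^p$ bound with \emph{no} decay. On the other side, Theorem~\ref{mean} yields the fixed-time $L^2$ bound $\norm{R_tf}_2\ll t\,e^{-(\rho-r)t}\norm{f}_2$ with the \emph{full} rate. Interpolating the decay-free $L^p$ bound against the rate-carrying $L^2$ bound (Riesz--Thorin, with interpolation parameter chosen so that the fraction of the exponent that survives is exactly $\tfrac12-\tfrac1p$) produces an $\Lii$ bound for the oscillation with the reduced rate $e^{-(1/2-1/p)(\rho-r)n}$ and constant $\ll_p\norm{f}_p$. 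Squaring these block estimates and summing over $n$---which converges since $(1/2-1/p)(\rho-r)<(\rho-r)$---gives the weighted maximal bound and hence the theorem, with $C_p(\cdot,f)$ the resulting maximal function.

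The main obstacle is precisely this continuous-time step, and it is what forces the degradation of the rate. One might hope to control the oscillation in $L^2$ directly through a one-dimensional Sobolev embedding in the $t$-variable on $[n,n+1]$, but this requires an $L^2$ bound with the full rate for $\partial_t\mathcal{A}_t$, that is, for the boundary (spherical) averages, and such a full-rate $L^2$ spherical maximal estimate is not at our disposal; routing the supremum over $t$ through the maximal inequality, where $p>2$ is essential, is exactly what makes the interpolation necessary and accounts for the loss from $\rho-r$ down to $(1/2-1/p)(\rho-r)$ (this is also why the statement is empty as $p\to2^{+}$ and tends to the half-rate $\tfrac12(\rho-r)$ as $p\to\infty$). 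Two ancillary points I would need to verify carefully are the boundedness of the projections $P_j$ on $\Lp(X)$, and that the smooth main terms $\psi_j$ are compatible with differentiation in $t$, so that $\partial_tR_tf$ inherits exponential decay rather than the slower rates $e^{-(\rho-s_j)t}$ attached to the individual $\psi_j$.
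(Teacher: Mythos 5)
Your reduction to a weighted maximal inequality and your disposal of the integer skeleton via Theorem~\ref{discretetime} are fine, but the key step---controlling the local oscillation $\sup_{t\in[n,n+1]}\abs{R_tf-R_nf}$, in your notation---contains a genuine gap. You propose to interpolate (Riesz--Thorin) between two estimates: an $\Lp\to\Lp$ bound with no decay \emph{for the sublinear maximal operator}, and the \emph{fixed-time} $\Lii\to\Lii$ bound $\norm{R_tf}_2\ll t\,e^{-(\rho-r)t}\norm{f}_2$ of Theorem~\ref{mean}. These are bounds for two different operators, and no interpolation theorem combines them into a bound for the maximal operator. To interpolate a supremum one must linearize it, say via $f\mapsto R_{\tau(x)}f(x)$ for a measurable time-selection $\tau:X\to[1,\infty)$, and then \emph{both} endpoint bounds must hold uniformly in $\tau$. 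The decay-free maximal bound linearizes; the fixed-time bound does not: knowing $\norm{R_tf}_2\ll t\,e^{-(\rho-r)t}\norm{f}_2$ for each fixed $t$ gives no control of $\norm{R_{\tau(\cdot)}f}_2$ uniformly over $\tau$---such control would already be a decay-carrying maximal estimate, which is precisely what is to be proved. The exponent bookkeeping also fails even formally: interpolating $\Lp\to\Lp$ (no decay, constant $C$) against $\Lii\to\Lii$ (constant $Cne^{-(\rho-r)n}$) yields $\Lq\to\Lq$ bounds for intermediate $q$ with constant in terms of $\norm{f}_q$; to obtain a constant $\ll\norm{f}_p$ you are forced to the parameter value $\theta=0$, which kills the decay entirely. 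No choice of parameter produces ``source $\Lp$, target $\Lii$, rate $(1/2-1/p)(\rho-r)$'' from your stated endpoints.

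What is missing is the mechanism that manufactures a legitimate decay-carrying \emph{maximal} endpoint, and this is exactly what the paper's proof supplies. It takes an exponentially refined grid $\{t_n\}$, with each interval $[m,m+1]$ subdivided into $\floor{e^{\d m/2}+1}$ equal parts, and converts the fixed-time $\Lii$ bound of Theorem~\ref{mean} into an almost-everywhere bound along the grid by summing squares of the errors over all grid points; the summability forces the halved rate $e^{-\d t_n/2}$, $\d=\rho-r$. The gaps between consecutive grid points are then filled by the Lipschitz-type continuity estimates $\abs{\psi_j(t+\e)-\psi_j(t)}\ll\e$ and $\abs{\pi(\b_{t+\e})f(x)-\pi(\b_t)f(x)}\ll\norm{f}_\infty\,\e$ (Lemmas~\ref{alltimes1}--\ref{alltimes2}, resting on the volume regularity (\ref{eq:reg})), which require $f\in\Linf(X)$. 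This produces the genuine maximal bound $\Norm{\sup_{t\geq1}e^{\d t/2}\abs{R_tf}}_2\ll\norm{f}_\infty$, i.e.\ an $\Linf\to\Lii$ endpoint with half rate; paired with the Nevo--Stein no-decay maximal bound $\Lii\to\Lii$, both endpoints are uniform in $\tau$ after linearization, and Stein's complex interpolation applied to the analytic family $U_z^\tau f=e^{z\d\tau(\cdot)/2}R_{\tau(\cdot)}f$ gives precisely the rate $(1/2-1/p)(\rho-r)$ with constant $\ll_p\norm{f}_p$. (Note also that the $\Lp$-boundedness of the projections $P_j$, which you flag as needing verification, is never needed: the paper only uses $\norm{P_jf}_2\le\norm{f}_2\le\norm{f}_\infty$.) Your instinct that routing the supremum through interpolation forces $p>2$, and your limiting values as $p\to2^+$ and $p\to\infty$, agree with the paper; but without the grid-plus-continuity construction of the $\Linf$ endpoint, your argument does not close.
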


We note that the interpolation argument in Theorem \ref{pointwise} inevitably diminishes
the quality of the error term, and only the summands with 
$$
s_j>(1/2-1/p)r+(1/2+1/p)\rho
$$
provide nontrivial information because the other summands are of smaller order than the error term.

\subsection*{Acknowledgements}
The first author is support by EPSRC, ERC and RCUK, and the second author is
supported by ERC. We would like to thank A.~Nevo for a useful discussion.


\section{Preliminaries}

Throughout the paper, $G$ is a connected simple rank-one Lie group with finite centre,
and $K$ is a maximal compact subgroup of $G$. We fix a one-parameter subgroup 
$A=\{a_t\}_{t\in \mathbb{R}}$ such that the Cartan decomposition
$$
G=K\,\{a_t\}_{t\ge 0}\,K
$$
holds and $d(a_t K, K)=t$,
where $d$ denotes the canonical Riemannian metric on the symmetric space $G/K$.

Let $m$ be a Haar measure on $G$. With respect to the Cartan decomposition,
it can be given as \cite[Ch.~10]{h}
$$
dm(k_1,t,k_2)=dm_K(k_1)\,\Delta(t)dt\,dm_K(k_2),
$$
with the probability Haar measure $m_K$ on $K$ and 
$$
\Delta(t)=(\sinh t)^{n_1} (\sinh 2t)^{2n_1},
$$
where $n_1,n_2$ denote the dimensions of the corresponding root spaces.
In particular, for the Riemannian ball $B_t$, defined in (\ref{eq:b_t}), we have 
\begin{equation}\label{eq:vol}
m(B_t)=\int_0^t \Delta(t)\,dt\sim_{t\to\infty} c\, e^{2\rho t}
\end{equation}
with $c>0$ and $\rho=(n_1+2n_2)/2$. Note that the parameter $\rho$ here is the same as in
(\ref{eq:dual}). It also follows from (\ref{eq:vol}) that
for every $t\ge 1$ and $\e\in (0,1)$,
\begin{equation}\label{eq:reg}
m(B_{t+\e}\backslash B_t)\ll \e\, m(B_t).
\end{equation}

We denote by $\beta_t$ the uniform probability measure on $G$ supported on the set $B_t$, namely,
$$
d\b_t(g) := \frac{\one_{B_t}(g)}{m (B_t)}\,dm(g).
$$
Given a unitary representation $\sigma:G\to U(\mathcal{H})$,
we define the corresponding averaging operator
$$
\sigma(\b_t):\mathcal{H}\to\mathcal{H}:v\mapsto \int_G \sigma(g)v\, d\b_t(g).
$$
In particular, for the unitary representation
$$
\pi(g)f(x)=f(g^{-1}x),\quad f\in L^2(X),
$$
induced by a measure-preserving action of $G$ on $(X,\mu)$, 
$\pi(\b_t)$ is equal to the operator $\mathcal{A}_t$ defined in (\ref{eq:a}).

If the representation $\sigma$ is irreducible, the subspace $\mathcal{H}^K$ of $K$-invariant vectors has
dimension at most one. If $\mathcal{H}^K$ has dimension one, $\sigma$ is called spherical.
In this case, we fix $v_\sigma\in \mathcal{H}^K$ with $\|v_\sigma\|=1$.
The corresponding spherical function is defined by
$$
\phi_\sigma(g):=\left<\sigma(g)v_\sigma,v_\sigma\right>.
$$
Using the identification (\ref{eq:dual}), we also write $\phi_s$ with $s\in \{\rho\}\cup (0,\rho']\cup
i\mathbb{R}^+$. An explicit integral formula for the spherical functions has been derived in
\cite{HC58a,HC58b}, and in the case of rank-one groups they can be also expressed in terms of Jacobi functions
\cite{Koo84}. We refer to the monograph \cite{GV88} for a comprehensive theory of spherical functions. 
In particular, we recall that by \cite[5.1.18]{GV88}, 
\begin{equation}\label{eq:01}
|\phi_s(a_t)|\ll e^{-(\rho-\hbox{\tiny Re}(s))t}(1+t),\quad t\ge 0,
\end{equation}
and by \cite[(2.19)]{Koo84}, when $0<s<\rho$
\begin{equation}\label{eq:02}
|\phi_s(a_t)|\sim_{t\to\infty} c(s)\,e^{-(\rho-s)t}
\end{equation}
for some $c(s)>0$.

\section{Proofs}

\begin{proof}[Proof of Theorem~\ref{mean}]
We first investigate the behaviour of averages $\sigma(\b_t)$ for irreducible representations $\sigma:G\to U(\mathcal{H})$.
Since the measure $\b_t$ is left $K$-invariant, it follows that 
$\sigma(\b_t)v$ is $K$-invariant for every $v\in\mathcal{H}$. 
In particular, $\sigma(\b_t)=0$ if $\sigma$ is not spherical.

Now we assume that $\sigma$ is spherical. Since $\b_t$ is right $K$-invariant,
for every $v\in\mathcal{H}$,
$$
\sigma(\b_t)v=\sigma(\b_t)\bar v,
$$
where
$$
\bar{v} := \int_{K} \sigma(k) v \,dm_K (k).
$$
Since $\dim (\mathcal{H}^K)=1$,
$$
\bar v=\left< \bar v, v_\sigma\right>v_\sigma=
\left(\int_{K} \left<\sigma(k) v,v_\sigma\right> \,dm_K (k)\right)v_\sigma
=\left<v, v_\sigma\right>v_\sigma,
$$
and
$$
\sigma(\b_t)v_\sigma=\left<\sigma(\b_t)v_\sigma,v_\sigma\right>v_\sigma=\psi_\sigma(t) v_\sigma,
$$
where
$$
\psi_\sigma(t):=\frac{1}{m (B_t)}\int_{B_t} \phi_\sigma(g)\,dm(g).
$$
Hence, we conclude that for any irreducible representation $\sigma$,
\begin{equation}\label{eq:irre}
\sigma(\b_t)=\psi_\sigma(t)\, P_\sigma,
\end{equation}
where $P_\sigma$ is the orthogonal projection on $\mathcal{H}^K$.
Clearly, the same formula also holds for representations which are direct sums of 
the representation $\sigma$.

Using the decomposition \eqref{eq:decomp}, every $f \in \Lii(X)$
can be written as
\[
	f = \int_{\hat G}^{\oplus}f_\sigma\,d\nu(\sigma),
\]
where $f_\sigma\in d_\sigma\mathcal{H}_\sigma$, and by \eqref{eq:irre},
$$
\pi(\beta_t)f = \int_{\hat G}^{\oplus}\sigma(\beta_t)f_\sigma\,d\nu(\sigma)
= \int_{\hat G^1}^{\oplus}\psi_\sigma(t)\, (P_{d_\sigma\sigma} f_\sigma)\,d\nu(\sigma).
$$
Taking into account Assumption \ref{a:purity}, we obtain
$$
\pi(\beta_t)f =\sum_{j=0}^k  \psi_j(t)\, P_j f+
\int_{\Omega}^{\oplus}\psi_\sigma(t)\, (P_{d_\sigma\sigma} f_\sigma)\,d\nu(\sigma),
$$
where $$
\psi_j(t):=\frac{1}{m (B_t)}\int_{B_t} \phi_{s_j}(g)\,dm(g),
$$
and $P_j$ are pairwise orthogonal projections.

Since by (\ref{eq:01}), 
$$
|\phi_\sigma(a_\t)|\ll e^{-(\rho-r)\t}(1+\t)\quad
\hbox{for all $\sigma\in \Omega$ and $\t\ge 0$,}
$$
we deduce that for all $\sigma\in\Omega$ and $t\ge 1$,
\begin{align*}
\abs{\psi_\sigma (t)} &\ll e^{-2\rho t}\int_{0}^{t}e^{2\rho \t}|\phi_\sigma(a_\t)|\,d\t\\
&=e^{-2\rho t}\int_{0}^{t}e^{(\rho+r)\t}(1+\tau)\,d\t \ll te^{-(\rho-r)t}.
\end{align*}
This implies that 
\begin{align*}
\left\|\pi(\beta_t)f -\sum_{j=0}^k  \psi_j(t)\, P_j f\right\|_2^2&=
\int_{\Omega}^{\oplus}|\psi_\sigma(t)|^2\, \|P_{d_\sigma\sigma} f_\sigma\|^2\,d\nu(\sigma)\\
&\ll \left(te^{-(\rho-r)t}\right)^2\|f\|_2^2,
\end{align*}
which is the main estimate of the theorem.
 
Finally, we observe that since $\phi_\rho=1$, we have $\psi_0=1$, and
it follows from l'H{\^o}pital's rule and (\ref{eq:02}) that
$$
\psi_j(t)=\frac{\int_0^t \phi_{s_j}(a_\tau)\Delta(\tau)\, d\tau}{\int_0^t \Delta(\tau)\, d\tau}
\sim_{t\to\infty} c_j\, e^{-(\rho-s_j)t}
$$
for some $c_j>0$. This completes the proof of the theorem. 
\end{proof}
 

\begin{proof}[Proof of Theorem~\ref{discretetime}]
Let $\e>0$.  For $f\in\Lii(X)$, we set
\[
	C_{\e}(\cdot,f):= \left(\sum_{n = 1}^{\infty} n^{-3-2\e} e^{2(\rho-r)n} \Abs{\pi(\b_{n})f - \sum_{j=0}^{k}\psi_j (n)\, P_j f}^2\right)^{1/2}.
\]
Then by Theorem~\ref{mean},
\begin{align*}
\norm{C_\e(\cdot,f)}_2^2 &\leq \sum_{n = 1}^{\infty} n^{-3-2\e}
e^{2(\rho-r)n}\Norm{\pi(\b_{n})f - \sum_{j=0}^{k}\psi_j (n)\, P_jf}_2^2\\
&\ll  \left(\sum_{n = 1}^{\infty} n^{-1-2\e}\right) \norm{f}_2^2 < \infty.
\end{align*}
This shows that $\Norm{C_\e (\cdot,f)}_2 \ll_\e\,\norm{f}_2$.
In particular, $C_\e(\cdot,f)$ is finite almost everywhere.  For almost every $x\in X$, 
\begin{align*}
	\Abs{\pi(\b_{n})f(x) - \sum_{j=0}^{k}\psi_j (n)\, (P_j f)(x)} \leq C_\e(x,f)\,n^{3/2+\e}e^{-(\rho-r) n},
\end{align*}
as desired.
\end{proof}


We start the proof of Theorem~\ref{pointwise} with two lemmas:

\begin{lemma}\label{alltimes1}
For all $t \geq 1$ and $0<\e<1$,
\begin{align*}
\abs{\psi_j(t+\e) - \psi_j (t)}\ll \e.
\end{align*}
\end{lemma}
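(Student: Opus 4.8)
The plan is to reduce $\psi_j$ to a ratio of two one-dimensional integrals by means of the Cartan decomposition, and then to control its increments using the volume regularity estimate \eqref{eq:reg}. First I would record the formula, valid for every $j$,
$$
\psi_j(t)=\frac{N_j(t)}{M(t)},\qquad N_j(t):=\int_0^t\phi_{s_j}(a_\tau)\Delta(\tau)\,d\tau,\qquad M(t):=\int_0^t\Delta(\tau)\,d\tau=m(B_t),
$$
and then expand the difference as a single fraction,
$$
\psi_j(t+\e)-\psi_j(t)=\frac{\big(N_j(t+\e)-N_j(t)\big)M(t)-N_j(t)\big(M(t+\e)-M(t)\big)}{M(t+\e)M(t)}.
$$

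The crucial input is that spherical functions are bounded by $1$. Indeed, $\phi_{s_j}(g)=\langle\sigma(g)v_\sigma,v_\sigma\rangle$ is a diagonal matrix coefficient of a unitary representation evaluated against a unit vector, so Cauchy--Schwarz gives $\abs{\phi_{s_j}(a_\tau)}\le 1$ for all $\tau\ge 0$. From this I would deduce the two elementary bounds $\abs{N_j(t)}\le M(t)$ and
$$
\abs{N_j(t+\e)-N_j(t)}\le\int_t^{t+\e}\abs{\phi_{s_j}(a_\tau)}\Delta(\tau)\,d\tau\le M(t+\e)-M(t)=m(B_{t+\e}\setminus B_t),
$$
that is, the increment of the numerator is dominated by that of the denominator.

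Substituting these into the fraction, and using that $M$ is increasing so that $M(t+\e)M(t)\ge M(t)^2$, the large common factor $M(t)$ cancels and I obtain
$$
\abs{\psi_j(t+\e)-\psi_j(t)}\le\frac{2\,M(t)\,m(B_{t+\e}\setminus B_t)}{M(t)^2}=\frac{2\,m(B_{t+\e}\setminus B_t)}{m(B_t)}.
$$
It then remains only to invoke the volume regularity estimate \eqref{eq:reg}, which gives $m(B_{t+\e}\setminus B_t)\ll\e\,m(B_t)$ for $t\ge 1$ and $\e\in(0,1)$, and the claimed bound $\abs{\psi_j(t+\e)-\psi_j(t)}\ll\e$ follows at once. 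I do not expect any genuine obstacle here: the whole argument rests on the uniform bound $\abs{\phi_{s_j}}\le 1$ together with the already-established regularity \eqref{eq:reg}, and the only point requiring care is to arrange the numerator and denominator increments so that the exponentially large factor $M(t)\sim c\,e^{2\rho t}$ cancels rather than being estimated directly.
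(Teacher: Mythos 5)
Your proof is correct and rests on exactly the same two ingredients as the paper's: the uniform bound $\abs{\phi_{s_j}}\le 1$ and the volume regularity estimate \eqref{eq:reg}. The only difference is bookkeeping — the paper bounds $\abs{\psi_j(t+\e)-\psi_j(t)}$ by the $L^1$-distance between the normalised densities $\one_{B_{t+\e}}/m(B_{t+\e})$ and $\one_{B_t}/m(B_t)$ and computes that distance directly, whereas you expand the difference of the two fractions $N_j/M$ by hand; these are the same estimate organised differently.
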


\begin{proof}
Since $|\phi_s|\le 1$, we have
\begin{align*}
\abs{\psi_j(t+\e) - \psi_j (t)} 
	&= \Abs{\int_{G}\phi_{s_j}(g)\,\left(\frac{\one_{B_{t+\e}}(g)}{m(B_{t+\e})} -
            \frac{\one_{B_t}(g)}{m(B_t)}\right)\,dm(g)} \\
	&\ll \left\|\frac{\one_{B_{t+\e}}(g)}{m(B_{t+\e})} -
            \frac{\one_{B_t}(g)}{m(B_t)}\right\|_1,
\end{align*}
and by (\ref{eq:reg}),
\begin{equation}\label{eq:bb}
\left\|\frac{\one_{B_{t+\e}}(g)}{m(B_{t+\e})} -
            \frac{\one_{B_t}(g)}{m(B_t)}\right\|_1
=\frac{m(B_{t+\e}\backslash B_t)}{m(B_{t+\e})}\ll \e,
\end{equation}
which completes the proof.
\end{proof}

\begin{lemma}\label{alltimes2}
For $f \in \Linf(X)$, $t \geq 1$, $0<\e<1$, and almost every $x\in X$,
\begin{align*}
&\abs{\pi(\b_{t+\e})f(x) - \pi(\b_{t})f(x)} \ll \norm{f}_{\infty}\,\e.
\end{align*}
\end{lemma}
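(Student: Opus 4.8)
The plan is to reduce the difference of two averaging operators to an integral of $f$ against a small, explicitly controlled measure, and then pass the estimate through the disintegration or, more directly, bound the operator via its action on the constant-one function. First I would write
\[
\pi(\b_{t+\e})f(x) - \pi(\b_{t})f(x) = \int_G f(g^{-1}x)\,\left(\frac{\one_{B_{t+\e}}(g)}{m(B_{t+\e})} - \frac{\one_{B_t}(g)}{m(B_t)}\right)\,dm(g).
\]
Because $t+\e>t$, the signed measure inside the parentheses is nonnegative on $B_t$ only after accounting for the renormalisation; more precisely it equals a nonnegative density on $B_{t+\e}\setminus B_t$ minus a nonnegative density on $B_t$. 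The key observation is that for $f\in\Linf(X)$ we have the crude pointwise bound $\abs{f(g^{-1}x)}\le \norm{f}_\infty$ for a.e. $x$, so I can dominate the absolute value of the difference by $\norm{f}_\infty$ times the total variation of the signed measure.

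The main step is therefore to identify that total variation with the $L^1$ quantity already estimated in Lemma~\ref{alltimes1}. Indeed,
\[
\abs{\pi(\b_{t+\e})f(x) - \pi(\b_{t})f(x)} \le \norm{f}_\infty \int_G \Abs{\frac{\one_{B_{t+\e}}(g)}{m(B_{t+\e})} - \frac{\one_{B_t}(g)}{m(B_t)}}\,dm(g),
\]
and this integral is exactly the $L^1$-norm computed in \eqref{eq:bb}, which by the volume regularity estimate \eqref{eq:reg} is $\ll \e$. Combining these two inequalities yields the claimed bound $\abs{\pi(\b_{t+\e})f(x) - \pi(\b_{t})f(x)} \ll \norm{f}_\infty\,\e$ for almost every $x$. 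The almost-everywhere qualifier enters only because $g\mapsto f(g^{-1}x)$ is defined for a.e.\ $x$ and because we use $\abs{f}\le\norm{f}_\infty$ $\mu$-a.e.; no further subtlety is needed.

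The part that looks like it might be an obstacle---getting an \emph{almost everywhere} statement rather than merely an $L^2$ or $L^1$ one---turns out to dissolve here, precisely because the hypothesis $f\in\Linf(X)$ lets me take the supremum norm outside the integral. Had $f$ only been in $\Lii(X)$, I could not bound $f(g^{-1}x)$ pointwise and would instead have to argue in norm; the $\Linf$ assumption is exactly what makes the pointwise argument immediate. So the structure of the proof is: (i) express the difference as a single integral against the difference of normalised indicators; (ii) pull out $\norm{f}_\infty$ using $\abs{f}\le\norm{f}_\infty$ a.e.; (iii) invoke \eqref{eq:bb} and \eqref{eq:reg} to bound the remaining $L^1$-norm by $\e$. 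There is essentially no hard calculation, since Lemma~\ref{alltimes1} has already done the volume bookkeeping.
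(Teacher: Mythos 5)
Your proof is correct and follows exactly the paper's argument: write the difference as an integral of $f(g^{-1}x)$ against the difference of normalised indicators, pull out $\norm{f}_\infty$, and bound the remaining $L^1$-norm by \eqref{eq:bb}. The only superfluous material is the opening remark about disintegration and the sign analysis of the signed measure, neither of which is needed once you pass to the total variation bound.
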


\begin{proof}
Since for almost every $x\in X$, 
\begin{align*}
\abs{\pi(\b_{t+\e})f(x) - \pi(\b_{t})f(x)} &= \Abs{\int_{G}f(g^{-1}x)\,
\left(\frac{\one_{B_{t+\e}}(g)}{m(B_{t+\e})} -
            \frac{\one_{B_t}(g)}{m(B_t)}\right)\,dm(g)}\\
	&\leq \norm{f}_{\infty}\,
\left\|\frac{\one_{B_{t+\e}}(g)}{m(B_{t+\e})} -
            \frac{\one_{B_t}(g)}{m(B_t)}\right\|_1,
\end{align*}
the lemma follows from (\ref{eq:bb}).
\end{proof}


\begin{proof}[Proof of Theorem~\ref{pointwise}]
Let $\delta=\rho-r$ and $\{t_n\}\subset [1,\infty)$ be an increasing sequence such that each interval
$[m,m+1]$, $m\in \mathbb{N}$, is divided in to $\floor{e^{\d m/2}+1}$ sub-intervals of equal length.  Then,
\begin{equation}\label{eq:finite}
	\sum_{n = 1}^{\infty} t^2_n e^{-\d\,t_n} \le \sum_{m=1}^\infty(m+1)^2\floor{e^{\d m/2}+1}e^{-\d m} <\infty.
\end{equation}

We first show that the claim of the theorem holds along times $t_n$. 
For $f \in \Lii(X)$, we set
\[
	C(\cdot,f):= \left(\sum_{n = 1}^{\infty} e^{\d t_n} \Abs{\pi(\b_{t_n})f - \sum_{j=0}^{k}\psi_j (t_n)\, P_jf}^2\right)^{1/2}.
\]
By Theorem~\ref{mean},
\begin{align*}
	\norm{C(\cdot,f)}_2^2 &\leq \sum_{n = 1}^{\infty} e^{\d t_n}\Norm{\pi(\b_{t_n})f -
          \sum_{j=0}^{k}\psi_j (t_n)\, P_jf}_2^2\\
		&\ll  \left(\sum_{n = 1}^{\infty} t_n^2 e^{-\d t_n}\right)\norm{f}_2^2 < \infty,
\end{align*}
Hence, for almost all $x\in X$, we have $C(x,f)<\infty$, and 
\begin{align*}
	\Abs{\pi(\b_{t_n})f(x) - \sum_{j=0}^{k}\psi_j (t_n)\, (P_j f)(x)} \leq C(x,f)\,e^{-\d t_n/2}.
\end{align*}

Now, let $t \ge 1$, and suppose that $t_n \leq t < t_{n+1}$.  Then by Lemmas~\ref{alltimes1}--\ref{alltimes2},
\begin{align*}
	\Abs{\pi(\b_{t})f - \sum_{j=0}^{k}\psi_j (t)\, P_j f} &\leq \Abs{\pi(\b_{t_n})f - \sum_{j=0}^{k}\psi_j (t_n)\, P_j f} \\
		&\indent+ \abs{\pi(\b_t)f - \pi(\b_{t_n})f} \\
&\indent + \sum_{j=0}^{k}\abs{P_j f}\abs{\psi_j (t) - \psi_j (t_n)}\\
		&\ll C(\cdot,f)\,e^{-\d t_n/2} + \floor{e^{\d\floor{t_n}/2}+1}^{-1}\,\norm{f}_\infty \\
&\indent+ \sum_{j=0}^k\abs{P_jf}\, \floor{e^{\d\floor{t_n}/2}+1}^{-1} \\
		&\ll \left(C(\cdot,f) + \norm{f}_\infty+\sum_{j=0}^k\abs{P_jf}\right)\,e^{-\d t/2}.
\end{align*} 
Since 
$$
\norm{C(\cdot,f)}_2 \ll \norm{f}_2 \leq \norm{f}_\infty
$$
and
$$
\norm{P_jf}_2\le \norm{f}_2\le \norm{f}_\infty,
$$
we deduce that for every $f \in \Linf(X)$,
\begin{align}\label{fixedbound}
	\Norm{\sup_{t\geq 1}e^{\d t/2}\Abs{\pi(\b_t)f - \sum_{j=0}^{k}\psi_j (t)\,P_j f}}_2\ll\norm{f}_\infty.
\end{align}
By \cite[Th.~4]{NS97}, for every $f\in L^2(X)$,
$$
\Norm{\sup_{t\geq 1}\Abs{\pi(\b_t)f}}_2 \ll \|f\|_2.
$$
Therefore, for any $f \in \Lii(X)$, we also have an estimate
\begin{align}\label{eq:2}
	&\Norm{\sup_{t\geq 1}\Abs{\pi(\b_t)f - \sum_{j=0}^{k}\psi_j (t)\,P_j f}}_2 \\
\leq &\Norm{\sup_{t\geq 1}\Abs{\pi(\b_t)f}}_2 
	+\Norm{\sup_{t\geq 1}\Abs{\sum_{j=0}^{k}\psi_j (t)\,P_j f}}_2 \ll \|f\|_2.\nonumber
\end{align}

Now we combine estimates (\ref{fixedbound}) and (\ref{eq:2}) using Stein's Complex Interpolation Theorem
\cite[Sec.~V.4]{sw}.  For a measurable function $\t:X\to[1,\infty)$ and complex parameter $z$, we define
a family of operators

\[
	U_z^\t f(x) := e^{z\d \t(x)/2}\left(\pi(\b_{\t(x)})f(x) - \sum_{j=0}^{k}\psi_j (\t(x))\,(P_jf)(x)\right).
\]
It follows from (\ref{fixedbound}) that when $\hbox{Re}(z)= 1$, the operator
$$
U_z^\t :\Linf(X)\to\Lii(X)
$$
is a bounded, and when $\hbox{Re}(z) = 0$, 
the operator 
$$
U_z^\t :\Lii(X)\to\Lii(X)
$$
is bounded, with bounds independent of $\tau$.
Therefore, by the Complex Interpolation Theorem, for every $u \in (0,1)$, 
the operator 
$$
U_u^\t:\Lp(X)\to\Lq(X)
$$
with
\begin{align*}
\frac{1}{p} = \frac{1-u}{2}\quad\hbox{and}\quad \frac{1}{q} = \frac{1-u}{2} + \frac{u}{2}
\end{align*}
is bounded as well with a bound independent of $\tau$.
By taking a supremum over all $\t$, we deduce that for every $u\in (0,1)$,
\begin{align*}
	\Norm{\sup_{t\geq 1}e^{u\d t/2}\Abs{\pi(\b_t)f - \sum_{j=0}^{k}\psi_j (t)\,P_j f}}_2\ll_u \norm{f}_{2/(1-u)}.
\end{align*}
Let
\[
	C_{u}(x,f) := \sup_{t\geq 1}e^{u\d t/2}\Abs{\pi(\b_t)f(x) - \sum_{j=0}^{k}\psi_j (t)\,(P_j f)(x)}.
\]
Then for almost every $x \in X$, we have $C_{u}(x,f)<\infty$, and
\[
	\Abs{\pi(\b_t)f(x) - \sum_{j=0}^{k}\psi_j (t)\,(P_j f)(x)} \leq C_{u}(x,f)\,e^{-u\d t/2},
\]
proving the theorem.
\end{proof}




\end{document}